 \definecolor{red}{rgb}{1,0,0}
\newcommand{\xddots}{%
  \raise 4pt \hbox {.}
  \mkern 6mu
  \raise 1pt \hbox {.}
  \mkern 6mu
  \raise -2pt \hbox {.}
}
 \theoremstyle{break}
\newtheorem{thm}{Theorem}[section] 
\newtheorem{RQ}[thm]{Remark}
\newtheorem{prop}[thm] {Proposition}
\newtheorem{cor}[thm] {Corollary}
\numberwithin{equation}{section}
\newtheorem{lem}[thm]{Lemma}
\newtheorem{assumption}{Assumption}
\newenvironment{outerdesc}[1][black]
  {\begin{description}[font=\normalfont\color{#1}]}
  {\end{description}}
\title{\large\bf{Kramers-Fokker-Planck
 operators with homogeneous potentials}} 
\author{
   Mona Ben Said\\
Laboratoire Analyse, G{\'e}om{\'e}trie et Applications\\
   Universit{\'e} Paris 13\\
   99 Avenue Jean Baptiste Cl{\'e}ment \\93430 Villetaneuse, France\\
  bensaid@univ-paris13.fr 
   }
\begin{document}

\maketitle
\begin{abstract}
In this article we establish a global subelliptic estimate for Kramers-Fokker-Planck operators with homogeneous potentials $V(q)$ under some conditions, involving in particular the control of the eigenvalues of the Hessian matrix of the potential. Namely, this work presents a different approach from the one in \cite{Ben}, in which the case $V(q_1,q_2)=-q_1^2(q_1^2+q_2^2)^n$ was already treated only for $n=1.$ With this article, after the former one dealing with non homogeneous polynomial potentials, we conclude the analysis of all the examples of degenerate ellipticity at infinty presented in the framework of Witten Laplacian by Helffer and Nier in \cite{HeNi}. Like in \cite{Ben}, our subelliptic lower bounds are the optimal ones up to some logarithmic correction.
\end{abstract}

\noindent\textbf{Key words:} subelliptic estimates, compact resolvent, Kramers-Fokker-Planck operator.\\
\noindent\textbf{MSC-2010:} 35Q84, 35H20, 35P05, 47A10, 14P10
\tableofcontents
\section{Introduction and main results}
In this work we study the Kramers-Fokker-Planck operator
\begin{align}
K_V=p.\partial_q-\partial_qV(q).\partial_p+\frac{1}{2}(-\Delta_p+p^2)~,\;\;\;\;\;(q,p)\in
  \mathbb{R}^{2d}
\,,
\label{a.3eq1}
\end{align}
where $q$ denotes the space variable, $p$ denotes the velocity
variable and the potential $V(q)$
is a real-valued function defined in the
whole space $\mathbb{R}^d_q.$ 

Setting 
\[
    O_p=\frac{1}{2}(D^2_p+p^2)
\;,\quad\quad \text{and}\quad\quad
X_V=p.\partial_q-\partial_qV(q).\partial_p~,
\] 
the Kramers-Fokker-Planck operator $K_V$ defined in (\ref{a.3eq1}) reads
$K_V=X_V+O_p.$ \\
We firstly list some notations used throughout the paper. We denote for an arbitrary function $V(q)$ in $\mathcal{C}^{\infty}(\mathbb{R}^d)$
\[
    \begin{aligned}
    \mathrm{Tr}_{+,V}(q) & = \sum\limits_{\substack{\nu\in \mathrm{Spec}(\mathrm{Hess}\; V)\\ \nu>0}} \nu(q)\,,
    \\ \mathrm{Tr}_{-,V}(q) &=-\sum\limits_{\substack{\nu\in \mathrm{Spec}(\mathrm{Hess}\; V)\\ \nu\le 0}}\nu(q)\;.
    \end{aligned}
\]
In particular for a polynomial $V$ of degree less than 3, $\mathrm{Tr}_{+,V}$ and $\mathrm{Tr}_{-,V}$ are two constants. In this case we define the constants $A_V$ and $B_V$ by
\begin{align*}
   A_V& = \max \{(1+\mathrm{Tr}_{+,V})^{2/3}, 1+\mathrm{Tr}_{-,V}\}\;,\\
 B_V &= \max\{\min\limits_{q\in\mathbb{R}^d}\left|\nabla\;V(q)\right|^{4/3}, \frac{1+\mathrm{Tr}_{-,V}}{\log(2+\mathrm{Tr}_{-,V})^2}\}\;. \end{align*} 
This work is principally based on the publication by Ben Said,
Nier, and Viola \cite{BNV}, which concerns the study of
Kramers-Fokker-Planck operators with polynomials of degree less than
three. In \cite{BNV} we proved the existence of a  constant $c>0$,
independent of $V$, such that the following global subelliptic estimate with remainder
\begin{align}
\|K_Vu\|^2_{L^2(\mathbb{R}^{2d})}+A_V\|u\|^2_{L^2(\mathbb{R}^{2d})}\ge
{c} \Big(\|O_pu\|^2_{L^2(\mathbb{R}^{2d})}&+\|X_Vu\|^2_{L^2(\mathbb{R}^{2d})}\nonumber\\&+\|\langle\partial_q V(q)\rangle^{2/3}u\|^2_{L^2(\mathbb{R}^{2d})}+\|\langle D_q\rangle^{2/3}u\|_{L^2(\mathbb{R}^{2d})}\Big)\label{eq44}
\end{align}
holds for all $u\in \mathcal{C}_0^{\infty}(\mathbb{R}^{2d}).$
Furthermore, supposing
$\mathrm{Tr}_{-,V}+\min\limits_{q\in\mathbb{R}^d}\left|\nabla\;V(q)\right|\not=0$,
there exists a constant $c>0$, independent of $V$, such that 
\begin{align}
\|K_Vu\|^2_{L^2(\mathbb{R}^{2d})}\ge c\,B_V\|u\|^2_{L^2(\mathbb{R}^{2d})}~,\label{1.5mm}
\end{align}
is valid for all $u\in \mathcal{C}_0^{\infty}(\mathbb{R}^{2d}).$
As a consequence collecting (\ref{1.5mm}) and (\ref{eq44}) together,
there is a constant $c>0$, independent of $V$, so that the global subelliptic estimates without remainder
\begin{align}
\|K_Vu\|^2_{L^2(\mathbb{R}^{2d})}\ge \frac{c}{1+\frac{A_V}{B_V}}\Big(\|O_pu\|^2_{L^2(\mathbb{R}^{2d})}&+\|X_Vu\|^2_{L^2(\mathbb{R}^{2d})}\nonumber\\&+\|\langle\partial_q V(q)\rangle^{2/3}u\|^2_{L^2(\mathbb{R}^{2d})}+\|\langle D_q\rangle^{2/3}u\|_{L^2(\mathbb{R}^{2d})}\Big)
\label{eq55}
\end{align}
holds for all $u\in \mathcal{C}_0^{\infty}(\mathbb{R}^{2d}).$
Here and throughout the paper we use the notation
\begin{align*}
\langle \cdot\rangle=\sqrt{1+|\cdot|^2}\;.
\end{align*}
Moreover we remind that for an arbitrary potential $V\in\mathcal{C}^{\infty}(\mathbb{R}^d)$, the Kramers-Fokker-Planck operator $K_V$ is essential maximal accretive when endowed with the domain $\mathcal{C}_0^{\infty}(\mathbb{R}^{2d})\;$(see Proposition 5.5, page 44 in \cite{HeNi}). Thanks to this property we deduce that the domain of the closure of $K_V$ is given by
\begin{align*}
D(K_V)=\left\{u\in L^2(\mathbb{R}^{2d}),\; K_Vu\in L^2(\mathbb{R}^{2d})\right\}~.
\end{align*}
Resultently, by density of $\mathcal{C}_0^{\infty}(\mathbb{R}^{2d})$
in the domain $D(K_V)$ all estimates written in this article, which
are verified with $C^\infty_0(\mathbb{R}^{2d})$ functions, can be
extended to $D(K_V).$ By relative bounded perturbation
  with bound less than $1$\,,
  this result holds as well when $V\in
  \mathcal{C}^{\infty}(\mathbb{R}\setminus\left\{0\right\})$ is an
  homogeneous function of degree $r>1$.

Our results will require the following assumption after setting \begin{align}
\mathcal{S}=\left\lbrace q\in\mathbb{R}^{d},\;\;|q|=1 \right\rbrace\;.\end{align}
\begin{assumption}\label{a.31.4}
The potential $V(q)$ is an homogeneous function of degree $r> 2$ in\\ $\mathcal{C}^{\infty}(\mathbb{R}^d\setminus\left\{~0\right\})$ and satisfies:
\begin{align}
\forall\;\; q\in\mathcal{S}\;,\;\;\;\;\;\;\;\;\;\partial_qV(q)=0\Rightarrow \mathrm{Tr}_{-,V}(q)>0\;.\label{a31.4.}
\end{align}
\end{assumption}
Our main result is the following.
\begin{thm}\label{a.3thm1.1}
If the potential $V(q)$ verifies Assumption \ref{a.31.4}, then
there exists a strictly positive constant $C_{V}>1$ (which depends on $V$) such that
\begin{align}
\|K_{V}u\|^2_{L^2}+C_{V}\|u\|^2_{L^2}\ge \frac{1}{C_V}\Big(\|L(O_p)u\|^2_{L^2}&+\|L(\langle\nabla V(q)\rangle^{\frac{2}{3}})  u\|^2_{L^2}\nonumber\\&+\|L(\langle\mathrm{Hess}\; V(q)\rangle^{\frac{1}{2}} ) u\|^2_{L^2}+\|L(\langle D_q\rangle^{\frac{2}{3}} ) u\|^2_{L^2}\Big)~,\label{a.31.6}
\end{align}
holds for all $u\in D(K_{V})$ where $L(s)=\frac{s+1}{\log(s+1)}$ for any $s\ge1.$ 
\end{thm}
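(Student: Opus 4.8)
The plan is to reduce the homogeneous case to the quadratic case treated in \cite{BNV} by a localization and dyadic rescaling argument. First I would introduce a partition of unity adapted to the scaling of the problem: a dyadic decomposition in $|q|$ into annuli $\{2^k\le|q|\le 2^{k+1}\}$, and on each such annulus a further (finite, $k$-independent) cover by cones on which the gradient $\nabla V$ either does not vanish or the second-order information dominates. The key structural input is Assumption \ref{a.31.4}: on the unit sphere $\mathcal{S}$, wherever $\partial_qV$ vanishes we have $\mathrm{Tr}_{-,V}>0$; by compactness of $\mathcal{S}$ and continuity this gives a uniform lower bound of the form $|\nabla V(q)|+\mathrm{Tr}_{-,V}(q)\ge c_0>0$ for $|q|=1$, and by homogeneity $|\nabla V(q)|\sim |q|^{r-1}$ and $\mathrm{Tr}_{\pm,V}(q)\sim|q|^{r-2}$ in the appropriate scaled sense on the annulus $|q|\sim 2^k$. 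So on each piece of the cover, after rescaling $q=2^k\tilde q$, $p=2^{k\mu}\tilde p$ with a well-chosen exponent $\mu$ (tuned so that $X_V$ and $O_p$ are comparably weighted), the operator $K_V$ becomes, to leading order, a Kramers-Fokker-Planck operator with an \emph{affine} symbol in the localized variable, i.e.\ a potential of degree $\le 2$, to which the estimates \eqref{eq44}, \eqref{1.5mm}, \eqref{eq55} apply with constants $A_V,B_V$ that are now explicit powers of $2^k$.

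The second step is to track these constants. On the annulus $|q|\sim 2^k$ the frozen quadratic potential has $\mathrm{Tr}_{-,V}\sim 2^{k(r-2)}$ (or $\min|\nabla V|\sim 2^{k(r-1)}$ on the cones where the gradient is bounded below), so $A_V$ and $B_V$ from \eqref{eq44}–\eqref{eq55} scale like fixed powers of $2^k$; crucially $A_V/B_V$ stays bounded uniformly in $k$ thanks to Assumption \ref{a.31.4}, which is exactly what prevents the estimate from degenerating. Undoing the rescaling turns the frozen-coefficient gains $\|O_p u\|$, $\|\langle\nabla V\rangle^{2/3}u\|$, $\|\langle D_q\rangle^{2/3}u\|$ into the corresponding pieces of \eqref{a.31.6}, and the Hessian term $\|\langle\mathrm{Hess}\,V\rangle^{1/2}u\|$ appears because on the cones where $\nabla V$ is small the second-order (Witten-type) term $\mathrm{Tr}_{-,V}\sim|\mathrm{Hess}\,V|$ is what drives the coercivity — its exponent $1/2$ matches the $2/3$ of the gradient term under $|\nabla V|\sim|q|^{r-1}$, $|\mathrm{Hess}\,V|\sim|q|^{r-2}$ via the endpoint $r\to\infty$ bookkeeping, and one checks the two are consistent on overlaps. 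The function $L(s)=\frac{s+1}{\log(s+1)}$ enters precisely here: summing the dyadic pieces back together costs a logarithmic factor in $2^k$ (equivalently in the size of the symbol), because the cutoff commutators and the error from replacing $V$ by its affine Taylor polynomial on each annulus are lower order by exactly one power of $|q|$ relative to the main term, and absorbing that loss uniformly over all scales forces the logarithmic correction — this is the same mechanism as in \cite{Ben}.

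The third step is the gluing: one applies the frozen estimate on each cutoff piece $\chi_{k,j}u$, sums over the finitely many cone indices $j$ and over all annuli $k$, and controls the commutator terms $[K_V,\chi_{k,j}]u$ by the good terms on the right-hand side at the cost of the logarithmic loss, using that the gradients of the cutoffs are bounded by $C\,2^{-k}$ on the support. The low-energy region $|q|\lesssim 1$ is handled separately and contributes only to the constant $C_V\|u\|^2$ on the left (here homogeneity of $V$ near $0$ is irrelevant since we are away from the compact core, and the relative-bounded-perturbation remark in the excerpt covers the singularity of $V$ at the origin). Finally, the density of $\mathcal{C}_0^\infty(\mathbb{R}^{2d})$ in $D(K_V)$ noted before the theorem upgrades the estimate from test functions to all of $D(K_V)$.

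The main obstacle will be the bookkeeping of the scaling exponents and the uniformity of $A_V/B_V$ across all dyadic scales: one must choose the $p$-rescaling exponent $\mu$ and organize the cone decomposition so that on \emph{every} piece the frozen quadratic operator satisfies the hypothesis $\mathrm{Tr}_{-,V}+\min|\nabla V|\neq 0$ of \eqref{1.5mm} with the ratio $A_V/B_V$ bounded — this is where Assumption \ref{a.31.4} is used in an essential, quantitative way, and where a naive freezing would fail on the cones where $\nabla V$ vanishes to high order. The secondary technical point is verifying that the affine Taylor remainder of $V$ on each annulus is genuinely subordinate (by one power of $|q|$) to the retained terms, so that the logarithmic loss — and not worse — suffices to close the argument.
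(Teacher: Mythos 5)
Your proposal gets the overall skeleton right — dyadic annuli in $q$, rescaling to a fixed shell, freezing $V$ into a degree-$\le2$ Taylor polynomial, and invoking the quadratic estimates \eqref{eq44}--\eqref{eq55} from \cite{BNV}, with Assumption~\ref{a.31.4} giving the needed dichotomy between $|\nabla V|$ bounded below and $\mathrm{Tr}_{-,V}$ bounded below on the unit sphere. But there is a genuine gap in the localization step, and it is precisely where the difficulty of the theorem lives.

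You propose to cover each dyadic annulus by a \emph{finite, $k$-independent} family of cones and freeze $V$ to an affine (or quadratic) polynomial on each cone. That cannot work. On a fixed cone intersected with the annulus $|q|\sim 2^k$, the base point $q_0$ and a generic $q$ are separated by a distance $\sim 2^k$, so the linear Taylor remainder is $O(|q-q_0|^2\,|\mathrm{Hess}\,V|)\sim 2^{2k}\cdot 2^{k(r-2)}=2^{kr}$, i.e.\ the \emph{same} order as $V$ itself, not lower by a power of $|q|$. Your bookkeeping claim that ``the error from replacing $V$ by its affine Taylor polynomial on each annulus is lower order by exactly one power of $|q|$'' is therefore false for a cone cover of fixed aperture, and the substitution $K_{V}\rightsquigarrow K_{\widetilde V}$ would produce an error term $\|\,h^{-1/2}(\partial_q V-\partial_q\widetilde V)\partial_p w\|$ that overwhelms the coercivity gain. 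The paper's proof fixes this by introducing, \emph{after} rescaling to the fixed shell $\overline{\mathcal{C}}$, a second, scale-\emph{dependent} partition of unity $\sum_k\theta_{k,h}^2=1$ by cubes of side $|\ln h|\,h^{\nu}$ (with $h=2^{-2(r-1)j}$ and $\nu$ pinned down by the double inequality \eqref{a.32..9}); the number of such boxes grows as $j\to\infty$, the Taylor remainder on each box is $\mathcal{O}(|\ln h|\,h^{\nu})$ or $\mathcal{O}((|\ln h|\,h^{\nu})^2)$ depending on the case, and the exponent $\nu$ is chosen so that both this remainder (after dividing by $\sqrt h$) and the commutator cost $|\ln h|^{-2}h^{1/(r-1)-2\nu}$ from the fine partition are dominated by the coercivity $H/\log(H)^2$ with $H=h^{-1/2+1/(2(r-1))}$. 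That delicate balance is the actual source of the factor $L(s)=(s+1)/\log(s+1)$; it does not come, as you suggest, from ``summing the dyadic pieces back together,'' and the cutoff gradients on the fine partition are of size $(|\ln h|\,h^\nu)^{-1}$, not $2^{-k}$.

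Two smaller remarks. First, the paper rescales only in $q$ (Lemma~\ref{a.3lem2.3}), not in $p$; the $p$-dilation you introduce appears in the paper only as a remark to display a semiclassical form, and is not used in the proof, so tuning a $p$-exponent $\mu$ is an extraneous degree of freedom. Second, the claim that ``$A_V/B_V$ stays bounded uniformly in $k$'' is not what drives the estimate: in Case~1 (near the zeros of $\nabla V$ on the shell) one applies \eqref{1.5mm} to the frozen quadratic potential and the relevant quantity is $B_{V^2_{k,h}}\sim H/\log(H)^2$, which already carries the logarithmic loss; the ratio $A_V/B_V$ plays no direct role. Without the fine $h$-dependent sub-partition and the explicit choice of $\nu$, the freezing argument does not close, so as written the proposal does not prove the theorem.
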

\begin{cor}
\label{cor}
The Kramers-Fokker-Planck operator $K_V$ with a potential $V(q)$ satisfying Assumption \ref{a.31.4} has a compact resolvent.
\end{cor}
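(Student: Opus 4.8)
The plan is to deduce Corollary~\ref{cor} from Theorem~\ref{a.3thm1.1} by a phase-space compactness (Riesz--Fr\'echet--Kolmogorov) argument. Since $K_V$ is maximal accretive on $D(K_V)$, the operator $K_V+C_V$ is boundedly invertible, so it is enough to show that $(K_V+C_V)^{-1}$ is compact, equivalently that the embedding of $D(K_V)$, equipped with the graph norm $u\mapsto\big(\|K_Vu\|_{L^2}^2+C_V\|u\|_{L^2}^2\big)^{1/2}$, into $L^2(\mathbb{R}^{2d})$ is compact. So I would take a sequence $(u_n)$ in $D(K_V)$ bounded in the graph norm; by \eqref{a.31.6} the four quantities $\|L(O_p)u_n\|_{L^2}$, $\|L(\langle\nabla V(q)\rangle^{2/3})u_n\|_{L^2}$, $\|L(\langle\mathrm{Hess}\,V(q)\rangle^{1/2})u_n\|_{L^2}$ and $\|L(\langle D_q\rangle^{2/3})u_n\|_{L^2}$ are uniformly bounded, and it remains to extract an $L^2$-convergent subsequence.

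The elementary input is the bound $s\le C\,L(s)^2$ valid for $s\ge\frac12$ (the function $s\mapsto s/L(s)^2$ is continuous and bounded on $[\tfrac12,\infty)$ since it tends to $0$ at infinity), which by functional calculus gives $O_p\le C\,L(O_p)^2$ and $\langle D_q\rangle^{2/3}\le C\,L(\langle D_q\rangle^{2/3})^2$ as operators. Hence, along the subsequence, $\sup_n\langle O_pu_n,u_n\rangle<\infty$ and $\sup_n\langle\langle D_q\rangle^{2/3}u_n,u_n\rangle<\infty$. Because $O_p=\tfrac12(D_p^2+p^2)$ controls both $p^2$ and $D_p^2$, the first bound yields, for the Fourier transform $\widehat{\,\cdot\,}$ on $\mathbb{R}^{2d}$ in the variables $(\eta,\zeta)$ dual to $(q,p)$,
\[
\int_{\{|p|\ge R\}}|u_n|^2+\int_{\{|\zeta|\ge R\}}|\widehat{u_n}|^2\ \le\ \frac{2}{R^2}\,\langle O_pu_n,u_n\rangle\ \xrightarrow[R\to\infty]{}\ 0
\]
uniformly in $n$, and the second bound gives $\int_{\{|\eta|\ge R\}}|\widehat{u_n}|^2\le\langle R\rangle^{-2/3}\langle\langle D_q\rangle^{2/3}u_n,u_n\rangle\to0$ uniformly in $n$. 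It remains to get the decay in $q$, and this is the only place Assumption~\ref{a.31.4} is used: on the compact sphere $\mathcal S$ the function $q\mapsto|\nabla V(q)|+\|\mathrm{Hess}\,V(q)\|$ is continuous and, by \eqref{a31.4.} (a vanishing gradient on $\mathcal S$ forces $\mathrm{Tr}_{-,V}(q)>0$, hence a negative eigenvalue of $\mathrm{Hess}\,V(q)$ and a nonzero Hessian), strictly positive, hence bounded below by some $m>0$; since $V$ is homogeneous of degree $r>2$, the maps $\nabla V$ and $\mathrm{Hess}\,V$ are homogeneous of degrees $r-1$ and $r-2$, so $|\nabla V(q)|+\|\mathrm{Hess}\,V(q)\|\ge m\,|q|^{\,r-2}$ for $|q|\ge1$. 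As $L$ is increasing on $[e-1,\infty)$ with $L(s)\to\infty$, it follows that $W(q):=L(\langle\nabla V(q)\rangle^{2/3})+L(\langle\mathrm{Hess}\,V(q)\rangle^{1/2})\to+\infty$ as $|q|\to\infty$, whence $\int_{\{|q|\ge R\}}|u_n|^2\le\big(\inf_{|q|\ge R}W\big)^{-2}\,\|W u_n\|_{L^2}^2\to0$ uniformly in $n$.

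Collecting these four estimates, the sequence $(u_n)$ is bounded in $L^2(\mathbb{R}^{2d})$ and satisfies, uniformly in $n$, $\int_{\{|q|+|p|\ge R\}}|u_n|^2+\int_{\{|\eta|+|\zeta|\ge R\}}|\widehat{u_n}|^2\to0$ as $R\to\infty$; by the Riesz--Fr\'echet--Kolmogorov criterion it is relatively compact in $L^2(\mathbb{R}^{2d})$. This proves the compactness of the embedding $D(K_V)\hookrightarrow L^2(\mathbb{R}^{2d})$ and hence that $K_V$ has compact resolvent. I expect no serious obstacle: the velocity directions and the $D_q$-direction are handled directly by the spectral theorem and the crude bound $s\le C\,L(s)^2$, and the single non-routine point is the $q$-confinement, i.e. checking that Assumption~\ref{a.31.4} together with homogeneity and $r>2$ turns $L(\langle\nabla V\rangle^{2/3})+L(\langle\mathrm{Hess}\,V\rangle^{1/2})$ into a weight tending to infinity in $q$. (Alternatively one could phrase the conclusion by noting that the right-hand side of \eqref{a.31.6} dominates $\|\Lambda u\|_{L^2}^2$ for a globally defined operator $\Lambda$ whose Weyl symbol tends to $+\infty$, hence with compact resolvent, but the Rellich argument above is self-contained.)
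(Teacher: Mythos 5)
Your proof is correct and follows essentially the same route as the paper's: deduce compactness from Theorem~\ref{a.3thm1.1} by observing that the right-hand side of \eqref{a.31.6} dominates a weight tending to infinity in all phase-space directions, with the $q$-confinement coming from Assumption~\ref{a.31.4} plus homogeneity, so that $|\nabla V(q)|+|\mathrm{Hess}\,V(q)|\gtrsim |q|^{r-2}\to\infty$. The paper uses the auxiliary weight $f_\delta=|\nabla V|^{4(1-\delta)/3}+|\mathrm{Hess}\,V|^{1-\delta}$ and leaves the phase-space Rellich step implicit, whereas you keep the $L$-weights, use the elementary bound $s\le C\,L(s)^2$, and spell out the Riesz--Fr\'echet--Kolmogorov argument in the variables $(q,p,\eta,\zeta)$ --- a harmless difference in presentation rather than a genuinely different method.
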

\begin{proof}
Let $0<\delta<1.$ Define the functions $f_\delta:\mathbb{R}^d\to\mathbb{R}$ by $$f_\delta(q)= |\nabla V(q)|^{\frac{4}{3}(1-\delta)}+|\mathrm{Hess}\,V(q)|^{1-\delta}~.$$
As a result of (\ref{a.31.6}) in Theorem~\ref{a.3thm1.1} there is a constant $C_V>1$ such that 
\begin{align*}
\|K_Vu\|^2_{L^2}+C_V\|u\|^2_{L^2}\ge\frac{1}{C_V}\Big(\langle u,f_\delta u\rangle+ \|L(O_p)u\|^2_{L^2}+\|L(\langle D_q\rangle^{\frac{2}{3}} ) u\|^2_{L^2}\Big)~,
\end{align*}
holds for all $u\in\mathcal{C}_0^{\infty}(\mathbb{R}^{2d})$ and all $\delta\in (0,1).$
In order to show that the operator $K_V$ has a compact resolvent it is sufficient to prove that $\lim\limits_{q\to +\infty}f_\delta(q)=+\infty.$
It is a matter of how different derivatives scale. Consider the unit sphere $S=\{q\in\mathbb{R}^{d}:|q|=1\}$. By Assumption (\ref{a31.4.}), at every point on $S$ either $\nabla V\neq 0$ or $|\mathrm{Hess}\; V|\neq 0$. Then the function $f_\delta$ is always positive on $S$. By hypothesis, $f_\delta$ is continuous on $S$ and therefore it achieves a positive minimum there, call it $m_\delta>0$.

For any $y,|y|>1$ there exists $\lambda>1$ such that $y=\lambda q$ for some $q\in S$. By homogeneity,
$$
V(y)=\lambda^rV\left(\frac{y}{\lambda}\right)=\lambda^rV(q)
$$
and therefore, by the chain rule
$$
|\nabla V(y)|=\lambda^{r-1}|\nabla V(q)|
$$
and
$$
|\mathrm{Hess}\;V(y)|=\lambda^{d(r-2)}|\mathrm{Hess}\;V(q)|.
$$
Adding these up,
$$
|\nabla V(y)|^{\frac{4}{3}(1-\delta)}+|\mathrm{Hess}\,V(y)|^{1-\delta}\ge \lambda^{(1-\delta)\min\{\frac{4}{3}(r-1),d(r-2)\}}f_\delta(q)\ge m_\delta\lambda^{(1-\delta)\min\{\frac{4}{3}(r-1),d(r-2)\}}
$$
which goes to infinity as $|y|=\lambda\to\infty$, since by assumption $r>2$.
\end{proof}
\begin{RQ}
The result of Corollary \label{cor} does not hold in the case of homogenous polynomial of degree 2 with degenerate Hessian. Indeed, we already know that in this case, the resolvent of the Kramers-Fokker-Planck operator $K_V$ is not compact since it is not as so for the Witten Laplacian (cf. Proposition 5.19 and Theorem 10.16 in \cite{HeNi}).
\end{RQ}
\begin{RQ}
Our results are in agreement with the results of Wei-Xi-Li \cite{Li}\cite{Li2} and those of Helffer-Nier on Witten Laplacian with homogeneous potential \cite{HeNi1}.
\end{RQ}
\section{Observations and first inequalities}
\subsection{Dyadic partition of unity}
In this paper, we make use of a locally finite dyadic partition of unity with respect to the position variable $q \in \mathbb{R}^d.$ Such a partition is described in the following Proposition. For  a detailed proof, we refer to \cite{BCD} (see page 59).
\begin{prop}
Let $\mathcal{C}$ be the shell $\left\lbrace x\in\mathbb{R}^{d},\;\;
  \frac{3}{4}< |x|<\frac{8}{3} \right\rbrace.$ There
exist radial functions $\chi$ and $\phi$ valued in the interval $[0,
1],$ belonging respectively to $\mathcal{C}^{\infty}_{0}(B(0,
\frac{4}{3}))$ and to $\mathcal{C}^{\infty}_{0}(\mathcal{C})$ such that
\begin{align*}
\forall x\in \mathbb{R}^d,\quad\quad\chi(x) +\sum_{j\ge0}\phi(2^{-j}
x) = 1\;,
\end{align*}  
\begin{align*}
\forall x\in \mathbb{R}^d\setminus\left\{0\right\},\quad\quad\sum_{j\in\mathbb{Z}}\phi(2^{-j}
x) = 1\;.
\end{align*}
\end{prop}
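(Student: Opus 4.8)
The plan is to build both functions from a single radial cut-off and to obtain the two partition-of-unity identities as telescoping sums, which is the classical Littlewood--Paley construction.

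First I would fix a small parameter $\varepsilon_0>0$ and choose $\chi\in C_0^\infty(\mathbb{R}^d)$ radial, valued in $[0,1]$, and \emph{radially non-increasing} (for instance $\chi(x)=g(|x|^2)$ with $g$ a smooth non-increasing function), such that $\chi\equiv 1$ on $\overline{B(0,\tfrac34+2\varepsilon_0)}$ and $\operatorname{supp}\chi\subset\overline{B(0,\tfrac43-2\varepsilon_0)}$; such $\chi$ exists by mollifying the indicator of an intermediate ball, and clearly $\chi\in C_0^\infty(B(0,4/3))$. Then I set
\[
\phi(x):=\chi(x/2)-\chi(x).
\]
This $\phi$ is radial and smooth. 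Because $|x/2|\le|x|$ and $\chi$ is radially non-increasing we have $\chi(x/2)\ge\chi(x)$, so $0\le\phi\le\chi(x/2)\le 1$. Moreover, if $|x|\le\tfrac34+2\varepsilon_0$ then $|x/2|\le\tfrac34+2\varepsilon_0$ as well, hence $\chi(x/2)=\chi(x)=1$ and $\phi(x)=0$; if $|x|\ge\tfrac83-2\varepsilon_0$ then $|x/2|\ge\tfrac43-\varepsilon_0>\tfrac43-2\varepsilon_0$, hence $\chi(x/2)=\chi(x)=0$ and again $\phi(x)=0$. Therefore $\operatorname{supp}\phi$ is a compact subset of $\{\tfrac34+2\varepsilon_0\le|x|\le\tfrac83-2\varepsilon_0\}\subset\mathcal{C}$, i.e. $\phi\in C_0^\infty(\mathcal{C})$ with values in $[0,1]$.

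It remains to check the two identities. For each fixed $x$ the dilates $\phi(2^{-j}x)$ are nonzero only for those $j$ with $\tfrac34<2^{-j}|x|<\tfrac83$, i.e. for $j$ in an interval of length $\log_2(32/9)<2$, so all the sums below are locally finite and all manipulations are legitimate. A telescoping computation gives, for every $N\ge 0$,
\[
\sum_{j=0}^{N}\phi(2^{-j}x)=\sum_{j=0}^{N}\bigl(\chi(2^{-j-1}x)-\chi(2^{-j}x)\bigr)=\chi(2^{-N-1}x)-\chi(x),
\]
and letting $N\to\infty$ and using $\chi(0)=1$ yields $\chi(x)+\sum_{j\ge 0}\phi(2^{-j}x)=1$ for all $x\in\mathbb{R}^d$. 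Likewise, for $M,N\ge 0$,
\[
\sum_{j=-M}^{N}\phi(2^{-j}x)=\chi(2^{-N-1}x)-\chi(2^{M}x),
\]
and for $x\neq 0$ the first term tends to $1$ as $N\to\infty$ while the second tends to $0$ as $M\to\infty$ (since $\chi$ has compact support and $|2^{M}x|\to\infty$), so $\sum_{j\in\mathbb{Z}}\phi(2^{-j}x)=1$.

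There is no genuine obstacle in this argument; the only two points calling for a little care are making the radii at which $\chi$ switches lie strictly between $\tfrac34$ and $\tfrac43$, so that $\operatorname{supp}\phi$ lands inside the \emph{open} shell $\mathcal{C}$, and imposing radial monotonicity on $\chi$ so as to guarantee $\phi\ge 0$.
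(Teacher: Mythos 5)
Your proof is correct and is exactly the classical Littlewood--Paley telescoping construction $\phi(x)=\chi(x/2)-\chi(x)$ that the paper delegates to the cited reference (Bahouri--Chemin--Danchin, Prop.\ 2.10), so there is no genuine divergence in method. You rightly flag the two small but essential precautions — taking $\chi$ radially non-increasing to ensure $0\le\phi\le 1$, and pushing the transition radii strictly inside $(\tfrac34,\tfrac43)$ so that $\operatorname{supp}\phi$ lies in the \emph{open} shell $\mathcal{C}$ — and the local finiteness remark justifies the limit passages in the telescoping sums.
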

Setting for all $q\in\mathbb{R}^d,$ 
\begin{align*}&\chi_{-1}(q)=\frac{\chi(2q)}{\Big(\chi^2(2q)+\sum\limits_{j'\ge0}\phi^2(2^{-j'}
q)\Big)^{\frac{1}{2}}}=\frac{\chi(2q)}{\Big(\chi^2(2q)+\phi^2(q)\Big)^{\frac{1}{2}}}\;,\\&
\chi_{j}(q)=\frac{\phi(2^{-j}
q)}{\Big(\chi^2(2q)+\sum\limits_{j'\ge0}\phi^2(2^{-j'}
q)\Big)^{\frac{1}{2}}}\stackrel{\text{if}~j\leq  2}{=}
\;,
\frac{\phi(2^{-j}
q)}{\Big(\sum\limits_{j-1\leq j'\leq j+1}\phi^2(2^{-j'}
q)\Big)^{\frac{1}{2}}}
\end{align*}
we get a localy finite dyadic partition of unity
\begin{align}
\sum_{j\geq-1}\chi_j^2(q)=
  \tilde{\chi}_{-1}^{2}(2|q|) +\tilde{\chi}_{0}^{2}(2|q|)+\sum_{j\geq 0}\widetilde{\chi}^2(2^{-j}|q|)=1\label{a.32.1}
\end{align}
where for all $j\in\mathbb{N},$ the cutoff functions $
\widetilde{\chi}_{0},\tilde{\chi}$ and $\widetilde{\chi}_{-1}$ belong
respectively to $\mathcal{C}_0^{\infty}(\left]
  \frac{3}{4},\frac{8}{3}\right[)$,  $\mathcal{C}_0^{\infty}(\left] \frac{3}{4},\frac{8}{3}\right[)$  and $\mathcal{C}_0^{\infty}(\left] 0,{\frac{4}{3}}\right[).$
\begin{lem}\label{a.3lem2.2}
Let $V$ be in $\mathcal{C}^{\infty}(\mathbb{R}^{d}\setminus\left\{0\right\}).$ Consider the Kramers-Fokker-Planck operator $K_{V}$ defined as in (\ref{a.3eq1}). For a locally finite partition of unity $\sum\limits_{j\ge-1}\chi^2_j(q)=1$ one has 
\begin{align}
\|K_{V}u\|^2_{L^2(\mathbb{R}^{2d})}=\sum\limits_{j\ge-1}\|K_{V}(\chi_ju)\|^2_{L^2(\mathbb{R}^{2d})}-\|(p\partial_q\chi_j)u\|^2_{L^2(\mathbb{R}^{2d})}\label{a.32.4}~,
\end{align}
for all $u\in\mathcal{C}_0^{\infty}(\mathbb{R}^{2d}).$

In particular when the cutoff functions $\chi_j$ have the form (\ref{a.32.1}), there exists a uniform constant  $c>0$  so that  
\begin{align}
(1+4c)\|K_{V}u\|^2_{L^2(\mathbb{R}^{2d})}+c\|u\|^2_{L^2(\mathbb{R}^{2d})}\ge\sum\limits_{j\ge-1}\|K_{V}(\chi_ju)\|^2_{L^2(\mathbb{R}^{2d})},\label{a.32.5}
\end{align}
holds for all $u\in\mathcal{C}_0^{\infty}(\mathbb{R}^{2d}).$
\end{lem}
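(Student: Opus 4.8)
The plan is to exploit the special structure $K_V = X_V + O_p$ together with the fact that the partition functions $\chi_j$ depend only on $q$, so that $O_p$ and multiplication by $\chi_j$ commute, while the only nontrivial commutator comes from the transport part $X_V = p\cdot\partial_q - \partial_q V\cdot\partial_p$; and here only the $p\cdot\partial_q$ piece sees $\chi_j$, producing the first-order term $(p\cdot\partial_q\chi_j)$. Concretely I would first record the pointwise (Leibniz) identity $K_V(\chi_j u) = \chi_j K_V u + (p\cdot\partial_q\chi_j)u$, valid for all $j$. The main computation is then to expand $\sum_j \|K_V(\chi_j u)\|^2$ using this identity and the relation $\sum_j \chi_j^2 = 1$, hoping that the cross terms telescope into something harmless.

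For the key step: expanding the square gives
\begin{align*}
\sum_{j\ge-1}\|K_V(\chi_j u)\|^2 = \sum_{j}\|\chi_j K_V u\|^2 + 2\,\mathrm{Re}\sum_j\langle \chi_j K_V u,(p\cdot\partial_q\chi_j)u\rangle + \sum_j\|(p\cdot\partial_q\chi_j)u\|^2.
\end{align*}
The first sum is exactly $\|K_V u\|^2$ since $\sum_j\chi_j^2=1$. For the cross term I would use that $\sum_j \chi_j(p\cdot\partial_q\chi_j) = \tfrac12 p\cdot\partial_q\big(\sum_j\chi_j^2\big) = 0$, so that $\sum_j\langle \chi_j K_V u,(p\cdot\partial_q\chi_j)u\rangle = \langle K_V u, \big(\sum_j \chi_j(p\cdot\partial_q\chi_j)\big)u\rangle = 0$. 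This yields precisely \eqref{a.32.4}. The only subtlety is justifying the interchange of the (locally finite, hence on any compact set finite) sum with the inner products when $u\in\mathcal C_0^\infty$; since on the support of $u$ only finitely many $\chi_j$ are nonzero, this is immediate, but I would state it explicitly because $V$ is only smooth away from $0$ and one should check $\chi_{-1}u$ still lies in the domain — which it does, as $\chi_{-1}$ is supported in a ball and $p\cdot\partial_q\chi_j$, $\partial_q V\cdot\partial_p$ act fine on $\mathcal C_0^\infty$ there, or alternatively one notes the identity is purely algebraic/distributional.

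For the second assertion \eqref{a.32.5}, I would drop the negative terms from \eqref{a.32.4} at a cost, i.e.\ $\sum_j\|K_V(\chi_j u)\|^2 = \|K_V u\|^2 + \sum_j\|(p\cdot\partial_q\chi_j)u\|^2$, and then bound $\sum_j\|(p\cdot\partial_q\chi_j)u\|^2$. The scaling of the dyadic construction \eqref{a.32.1} gives $|\partial_q\chi_j(q)|\le C\,2^{-j}$ uniformly, with at most three overlapping $j$'s at any point, so $\sum_j|p\cdot\partial_q\chi_j(q)|^2 \le C\langle q\rangle^{-2}|p|^2 \le C|p|^2 \cdot \mathbf{1}_{|q|\ge \text{const}} + C|p|^2$; more usefully one writes $\sum_j\|(p\cdot\partial_q\chi_j)u\|^2 \le 4c\,\|O_p u\|^2 + c'\|u\|^2$ by comparing $|p|^2$ to the harmonic oscillator $O_p$ via $\|\,|p|\,u\|^2 \le 2\langle O_p u,u\rangle \le \|O_p u\|^2 + \|u\|^2$, absorbing the $\langle q\rangle^{-2}$ weight trivially by $1$. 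Feeding this back and using $\|O_p u\|\le\|K_V u\|+\ldots$ — or more simply the coercivity already packaged in \eqref{eq44}/\eqref{1.5mm} — yields $\sum_j\|K_V(\chi_j u)\|^2 \le (1+4c)\|K_V u\|^2 + c\|u\|^2$. The main obstacle here is keeping the constant $c$ uniform: one must make sure the $2^{-j}$ decay of $\partial_q\chi_j$ and the bounded overlap genuinely beat the growth of $|p|$, which is controlled because $p\cdot\partial_q\chi_j$ carries the factor $\langle q\rangle^{-1}$ from the dyadic scaling and $|p|^2$ is dominated by $O_p$ — a purely $p$-side estimate independent of $V$.
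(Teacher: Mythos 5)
Your proof of the IMS--type identity \eqref{a.32.4} is correct and is essentially the computation the paper invokes (it cites \cite{Ben} for the details): write $K_V(\chi_j u)=\chi_j K_V u+(p\cdot\partial_q\chi_j)u$, expand the square, use $\sum_j\chi_j^2=1$ for the diagonal term, and note that the cross term drops because $\sum_j\chi_j\,\partial_q\chi_j=\tfrac12\,\partial_q\bigl(\sum_j\chi_j^2\bigr)=0$.

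There is, however, a genuine gap in your proof of \eqref{a.32.5}. After correctly bounding $\sum_j\|(p\cdot\partial_q\chi_j)u\|^2\lesssim\|pu\|^2$ via the dyadic decay $|\partial_q\chi_j|\lesssim 2^{-j}$ and bounded overlap, you pass from $\|pu\|^2\le 2\langle O_p u,u\rangle$ to $\le\|O_p u\|^2+\|u\|^2$ and then propose to control $\|O_p u\|$ by $\|K_V u\|$, either by writing ``$\|O_p u\|\le\|K_V u\|+\ldots$'' or by invoking \eqref{eq44}--\eqref{1.5mm}. Neither is available here: \eqref{eq44} and \eqref{1.5mm} are stated only for polynomials of degree at most two and their constants $A_V,B_V$ depend on $V$, while the lemma covers an arbitrary $V\in\mathcal{C}^\infty(\mathbb{R}^d\setminus\{0\})$ with a uniform constant $c$; and a bound $\|O_p u\|\le C\bigl(\|K_V u\|+\|u\|\bigr)$ with $C$ independent of $V$ is false, since expanding $\|K_V u\|^2=\|X_V u\|^2+\|O_p u\|^2+\langle u,[O_p,X_V]u\rangle$ leaves the unbounded commutator term $[O_p,X_V]=-\partial_p\!\cdot\!\partial_q+p\!\cdot\!\partial_q V$. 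The paper's route is simpler and avoids $\|O_p u\|$ entirely: since $X_V$ is a real divergence-free vector field it is skew-adjoint, so $\mathrm{Re}\,\langle u,K_V u\rangle=\langle u,O_p u\rangle\ge\tfrac12\|pu\|^2$, whence $\|pu\|^2\le 2\,\mathrm{Re}\,\langle u,K_V u\rangle\le\|u\|^2+\|K_V u\|^2$ with purely universal constants. Substituting this into $\sum_j\|(p\cdot\partial_q\chi_j)u\|^2\le 4c\|pu\|^2$ and plugging back into \eqref{a.32.4} gives \eqref{a.32.5}. So your decomposition and the dyadic overlap estimate are right; only the link from $\|pu\|^2$ to $\|K_V u\|^2$ needs to go through the real part of the quadratic form rather than through $\|O_p u\|^2$.
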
 
\begin{proof}
The proof of the equality (\ref{a.32.4}) is detailed in \cite{Ben}. Now it remains to show the inequality (\ref{a.32.5}), after considering a locally finite dyadic partition of unity 
\begin{align}
\sum_{j\geq-1}\chi_j^2(q)=1\;,
\end{align}
where for all $j\in\mathbb{N},$ the cutoff functions $ \chi_{j}$ and $\chi_{-1}$ are respectively supported in the shell \\$\left\lbrace q\in\mathbb{R}^{d},\;\; 2^j\frac{3}{4}\le |q|\le 2^j\frac{8}{4} \right\rbrace$  and in the ball $B( 0,\frac{3}{4}).$

Since the partition is locally finite, for each index $j\ge-1$ there are finitely many $j'$ such that $(\partial_q\chi_j)\chi_{j'}$ is nonzero. 
Along these lines, there exists a uniform constant $c>0$ so that
\begin{align}
\sum\limits_{j\geq-1}\|(p\partial_q\chi_j)u\|^2_{L^2}&=\sum\limits_{j\geq-1}\sum\limits_{j'\geq-1}\|(p\partial_q\chi_j)\chi_{j'}u\|^2_{L^2}\nonumber\\&\le c\sum\limits_{j\geq-1}\frac{1}{(2^j)^2}\|p\chi_ju\|^2_{L^2}~,\label{a.32.6}
\end{align}
holds for all $u\in\mathcal{C}_0^{\infty}(\mathbb{R}^{2d}).$

On the other hand, for every $u\in\mathcal{C}_0^{\infty}(\mathbb{R}^{2d}),$
\begin{align}
c\sum\limits_{j\geq-1}\frac{1}{(2^j)^2}\|p\chi_ju\|^2_{L^2}\le 4c\,\|pu\|^2_{L^2}\le8c\,\mathrm{Re}\;\langle u,K_Vu\rangle\le 4c\,(\|u\|^2_{L^2}+\|K_Vu\|^2_{L^2})\;.\label{a.32.7}
\end{align}
Collecting the estimates (\ref{a.32.4}), (\ref{a.32.6}) and (\ref{a.32.7}), we establish the desired inequality (\ref{a.32.5}). 
\end{proof}
\subsection{Localisation in a fixed Shell} 

\begin{lem}\label{a.3lem2.3}
Let $V(q)$ be an homogeneous function in $\mathcal{C}^{\infty}(\mathbb{R}^{d}\setminus\left\{0\right\})$ of degree $r$ and assume $j\in\mathbb{Z}.$ Given $u_j\in\mathcal{C}_0^{\infty}(\mathbb{R}^{2d}),$ one has 
\begin{align*}
\|K_{V}u_j\|_{L^2(\mathbb{R}^{2d})}=\|K_{j,V}v_j\|_{L^2(\mathbb{R}^{2d})}\;,
\end{align*}
where the operator $K_{j,V}$ is defined by \begin{align}
 K_{j,V}=\frac{1}{2^j}p\partial_q-(2^j)^{r-1}\partial_qV(q)\partial_p+O_p\;,\label{a.3222}\end{align}
and $\;v_j(q,p)=2^{\frac{jd}{2}}u_j(2^jq,p).$ 

In particular when $u_j$ is supported in  $\left\lbrace q\in\mathbb{R}^{d},\; 2^j\frac{3}{4}\le |q|\le 2^j\frac{8}{3}\right\rbrace,$ the support of $v_j$ is a fixed shell $\overline{\mathcal{C}}=\left\lbrace q\in\mathbb{R}^{d},\; \frac{3}{4}\le |q|\le \frac{8}{3}\right\rbrace\;.$ 
\end{lem}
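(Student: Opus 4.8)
The statement is essentially a change of variables in the position variable, so the plan is to make the scaling explicit and verify that it conjugates $K_V$ to $K_{j,V}$. First I would introduce the dilation operator $U_j$ acting on functions of $(q,p)$ by $(U_j w)(q,p) = 2^{jd/2} w(2^j q, p)$, so that $v_j = U_j u_j$. The factor $2^{jd/2}$ is chosen precisely so that $U_j$ is unitary on $L^2(\mathbb{R}^{2d})$: indeed $\|U_j w\|_{L^2}^2 = \int 2^{jd} |w(2^j q, p)|^2\, dq\, dp = \int |w(q',p)|^2\, dq'\, dp = \|w\|_{L^2}^2$ after the substitution $q' = 2^j q$. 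Hence $\|K_V u_j\|_{L^2} = \|U_j K_V u_j\|_{L^2} = \|U_j K_V U_j^{-1} v_j\|_{L^2}$, and the lemma reduces to the operator identity $U_j K_V U_j^{-1} = K_{j,V}$.

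To check that identity, I would conjugate each of the three terms of $K_V = p\cdot\partial_q - \partial_q V(q)\cdot\partial_p + O_p$ separately. Since $U_j$ only rescales the $q$-variable: the $O_p = \tfrac12(D_p^2 + p^2)$ term is untouched, $U_j O_p U_j^{-1} = O_p$. For the transport term, the chain rule gives $\partial_q\big(w(2^j q, p)\big) = 2^j (\partial_q w)(2^j q, p)$, so $U_j (p\cdot\partial_q) U_j^{-1} = 2^j\, p\cdot\partial_q$; wait — one must be careful with the direction. Writing it cleanly: for any function $g$, $\big(U_j^{-1} v\big)(q,p) = 2^{-jd/2} v(2^{-j} q, p)$, and then $\big(p\cdot\partial_q (U_j^{-1}v)\big)(q,p) = 2^{-jd/2} 2^{-j} p\cdot(\partial_q v)(2^{-j}q,p)$, so applying $U_j$ multiplies the argument back and produces the prefactor $2^{jd/2} 2^{-jd/2} 2^{-j} = 2^{-j}$; thus $U_j (p\cdot\partial_q) U_j^{-1} = \tfrac{1}{2^j} p\cdot\partial_q$, matching the first term of $K_{j,V}$. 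For the force term, the same computation on $-\partial_q V(q)\cdot\partial_p$ gives $U_j\big(\partial_q V(q)\cdot\partial_p\big)U_j^{-1} = \big(\partial_q V(2^j q)\big)\cdot\partial_p$, and here homogeneity of $V$ of degree $r$ enters: $\partial_q V$ is homogeneous of degree $r-1$, so $\partial_q V(2^j q) = (2^j)^{r-1} \partial_q V(q)$. Collecting the three pieces yields exactly $U_j K_V U_j^{-1} = \tfrac{1}{2^j} p\cdot\partial_q - (2^j)^{r-1} \partial_q V(q)\cdot\partial_p + O_p = K_{j,V}$, which is \eqref{a.3222}.

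Finally, for the support statement: if $u_j$ is supported in $\{2^j \tfrac34 \le |q| \le 2^j \tfrac83\}$ in the $q$-variable, then $v_j(q,p) = 2^{jd/2} u_j(2^j q, p)$ is nonzero only when $2^j q$ lies in that shell, i.e. when $\tfrac34 \le |q| \le \tfrac83$, which is the fixed shell $\overline{\mathcal{C}}$ independent of $j$. The only subtlety worth flagging is keeping the direction of the conjugation and the sign of the exponent $2^{-j}$ versus $2^{j}$ straight — there is no analytic obstacle here, since $u_j \in \mathcal{C}_0^\infty$ and $V \in \mathcal{C}^\infty(\mathbb{R}^d\setminus\{0\})$ with the support of $v_j$ bounded away from the origin, so all derivatives make classical sense and the unitary equivalence is exact, not merely an estimate.
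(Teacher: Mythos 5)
Your proof is correct and takes essentially the same approach as the paper: a change of variables in the position variable realized as an $L^2$-unitary dilation, with the homogeneity of $\partial_q V$ supplying the factor $(2^j)^{r-1}$. The paper simply writes out the substitution $u_j(q,p)=2^{-jd/2}v_j(2^{-j}q,p)$ and computes $K_V u_j$ directly, rather than phrasing it as the conjugation identity $U_j K_V U_j^{-1}=K_{j,V}$, but the underlying calculation is identical.
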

\begin{proof}
Let $j\in\mathbb{Z}$ be an index. Assume $u_j\in\mathcal{C}_0^{\infty}(\mathbb{R}^{2d})$ and state \begin{align}
v_j(q,p)=2^{\frac{jd}{2}}u_j(2^jq,p)\;.\label{a.3211}
\end{align}
On the grounds that the function $V$ is homogeneous of degree $r$ we deduce that respectively its gradient $\partial_qV(q)$ is homogeneous of degree $r-1.$ As follows, we can write 
\begin{align*}
K_Vu_j(q,p)&=K_V\Big(2^{\frac{-jd}{2}}v_j(2^{-j}q,p)\Big)\\&=2^{\frac{-jd}{2}}\Big((2^{-j}p\partial_q-(2^j)^{r-1}\partial_qV(q)\partial_p+O_p)v_j\Big)(2^{-j}q,p)\;.
\end{align*}
Notice that if
\begin{align*}
\mathrm{supp}\;u_j\subset\left\lbrace q\in\mathbb{R}^{d},\; 2^j\frac{3}{4}\le |q|\le 2^j\frac{8}{3}\right\rbrace\;,
\end{align*}
the cutoff functions $v_j,$ defined in (\ref{a.3211}), are all supported in the fixed shell \begin{align*}
\overline{\mathcal{C}}=\left\lbrace q\in\mathbb{R}^{d},\; \frac{3}{4}\le |q|\le \frac{8}{3}\right\rbrace\;.
\end{align*}
\end{proof}
\begin{RQ}
Assume $j\in\mathbb{N}.$ If we introduce a small parameter $h=2^{-2(r-1)j}$ then the operator $K_{j,V},$ defined in (\ref{a.3222}), can be rewritten as
\begin{align*}
K_{j,V}=\frac{1}{h}\Big(\sqrt{h}p(h^{\frac{1}{2}+\frac{1}{2(r-1)}}\partial_q)-\sqrt{h}\partial_qV(q)\partial_p+\frac{h}{2}(-\Delta_p+p^2)\Big)\;.
\end{align*}
Now owing to a dilation with respect to the velocity variable $p,$ which for $(\sqrt{h}p,\sqrt{h}\partial_p)$ associates $(p,h\partial_p),$ we deduce that the operator $K_{j,V}$ is unitary equivalent to
\begin{align*}
\widehat{K}_{j,V}=\frac{1}{h}\Big(p(h^{\frac{1}{2}+\frac{1}{2(r-1)}}\partial_q)-\partial_qV(q)h\partial_p+\frac{1}{2}(-h^2\Delta_p+p^2)\Big)\;.
\end{align*}
In particular, taking $r=2,$ 
\begin{align*}
\widehat{K}_{j,V}=\frac{1}{h}\Big(p(h\partial_q)-\partial_qV(q)h\partial_p+\frac{1}{2}(-h^2\Delta_p+p^2)\Big)\;,
\end{align*}
is clearly a semiclassical operator with respect to the variables $q$ and $p$.
However if $r>2$, the operator $\widehat{K}_{j,V}$ is semiclassical only with respect to the velocity variable  $p$ (since $h^{\frac{1}{2}+\frac{1}{2(r-1)}}>h$).
For a polynomial $V(q),$ the case $r=2$ corresponds to  the quadratic situation. Extensive works have been done concerned with this case (see  \cite{Hor}\cite{HiPr}\cite{Vio}\cite{Vio1}\cite{AlVi}\cite{BNV}).   
\end{RQ}

\section{Proof of the main result}
In this section we present the proof of Theorem~\ref{a.3thm1.1}.
\begin{proof}
In the whole proof we denote \begin{align*}
\overline{\mathcal{C}}=\left\lbrace q\in\mathbb{R}^{d},\; \frac{3}{4}\le |q|\le \frac{8}{3}\right\rbrace\;.
\end{align*}
Assume $u\in\mathcal{C}_0^{\infty}(\mathbb{R}^{2d})$ and consider a localy finite dyadic partition of unity defined as in (\ref{a.32.1}). By Lemma \ref{a.3lem2.2} (see (\ref{a.32.5})), there is a uniform constant $c$ such that
\begin{align}
(1+4c)\|K_{V}u\|^2_{L^2(\mathbb{R}^{2d})}+c\|u\|^2_{L^2(\mathbb{R}^{2d})}\ge\sum\limits_{j\ge-1}\|K_{V}u_j\|^2_{L^2(\mathbb{R}^{2d})}.\label{a.33111}
\end{align}
where we denote $u_j=\chi_ju.$ We obtain by Lemma \ref{a.3lem2.3} and the estimate (\ref{a.33111})
\begin{align}
(1+4c)\|K_{V}u\|^2_{L^2(\mathbb{R}^{2d})}+c\|u\|^2_{L^2(\mathbb{R}^{2d})}\ge\sum\limits_{j\ge-1}\|K_{j,V}v_j\|^2_{L^2(\mathbb{R}^{2d})}\;,
\end{align}
where the operator 
\begin{align*}
 K_{j,V}=\frac{1}{2^j}p\partial_q-(2^j)^{r-1}\partial_qV(q)\partial_p+O_p\;,\end{align*}
and $ v_j(q,p)=2^{\frac{jd}{2}}u_j(2^jq,p)\;.$ Setting $h=2^{-2(r-1)j},$ one has
\begin{align*}
 K_{j,V}=p(h^{\frac{1}{2(r-1)}}\partial_q)-h^{-\frac{1}{2}}\partial_qV(q)\partial_p+\frac{1}{2}(-\Delta_p+p^2)\;.\end{align*}
Now, fix $\nu>0$ such that 
\begin{align}
 \max(\frac{1}{6},\frac{1}{8}+\frac{3}{8(r-1)})<\nu<\frac{1}{4}+\frac{1}{4(r-1)}~. \label{a.32..9}
\end{align}
Such a choice is always possible: 
\begin{itemize}
\item In the case $r\ge 10,$
$\max(\frac{1}{6},\frac{1}{8}+\frac{3}{8(r-1)})$ equals $\frac{1}{6}$
while $\frac{1}{4}+\frac{1}{4(r-1)}$ is always greater than
$\frac{1}{4}.$ So we can choose a value $\nu$ independent of $r$
between $\frac{1}{6}$ and $\frac{1}{4}.$
\item  in the case $2< r<10,$ $\max(\frac{1}{6},\frac{1}{8}+\frac{3}{8(r-1)})$ equals $\frac{1}{8}+\frac{3}{8(r-1)}<\frac{1}{4}+\frac{1}{4(r-1)}$\,. Hence, we can choose for example $\nu=\frac{3}{16}+\frac{5}{16(r-1)}.$ 
\end{itemize}
Taking  $\nu>0,$ satisfying (\ref{a.32..9}), we consider a locally finite partition of unity with respect to $q\in\mathbb{R}^d$ given by
\begin{align*}
\sum\limits_{k\ge-1}(\theta_{k,h}(q))^2&=\sum\limits_{k\ge-1}\Big(\theta(\frac{1}{|\ln(h)|h^{\nu}}q-q_k)\Big)^2\nonumber\\&=\sum\limits_{k\ge-1}\Big(\theta(\frac{1}{|\ln(h)|h^{\nu}}(q-q_{k,h})\Big)^2=1\;,
\end{align*}
where for any index $k$ \begin{align*}
q_{k,h}=|\ln(h)|h^{\nu}q_{k}\;,\;\;\;\;\;\mathrm{supp}\;\theta_{k,h}\subset B(q_{k,h},|\ln(h)|h^{\nu})\;,\;\;\;\;\;\theta_{k,h}\equiv1\;\;\text{in}\;\;B(q_{k,h},\frac{1}{2}|\ln(h)|h^{\nu})\;.
\end{align*}
Using this partition we get through Lemma \ref{a.3lem2.2} (see (\ref{a.32.4})),
\begin{align}
\|K_{j,V}v_j\|^2_{L^2}\ge  \sum\limits_{k\ge-1}\|K_{j,V}\theta_{k,h}v_j\|^2_{L^2}-|\ln(h)|^{-2}h^{\frac{1}{r-1}-2\nu}\|p\theta_{k,h}v_j\|^2_{L^2}\;.\label{a.3322}
\end{align}
In order to reduce the written expressions we denote in the whole of the proof \begin{align*}w_{k,j}=\theta_{k,h}v_j\;.\end{align*}
Taking into account (\ref{a.3322}),
\begin{align}
\|K_{j,V}v_j\|^2_{L^2}&\ge  \sum\limits_{k\ge-1}\|K_{j,V}w_{k,j}\|^2_{L^2}-|\ln(h)|^{-2}h^{\frac{1}{r-1}-2\nu}\|w_{k,j}\|_{L^2}\|K_{j,V}w_{k,j}\|_{L^2}\nonumber\\&\ge\sum\limits_{k\ge-1}{\frac{3}{4}}\|K_{j,V}w_{k,j}\|^2_{L^2}-2|\ln(h)|^{-2}h^{\frac{1}{r-1}-2\nu}\|w_{k,j}\|^2_{L^2}\;.\label{a.32.13}
\end{align}
Notice that in the last inequality we  simply use respectively the fact that 
\begin{align*}
\|pw_{k,j}\|^2_{L^2}\leq 2\mathrm{Re}\langle w_{k,j},K_{j,V}w_{k,j}\rangle\le \|w_{k,j}\|_{L^2}\|K_{j,V}w_{k,j}\|_{L^2}\;,
\end{align*}
and the Cauchy inequality with epsilon ( $ab\le \epsilon a^2+\frac{1}{4\epsilon}b^2$).

From now on, set \begin{align*}
K_0=\left\{q\in \overline{\mathcal{C}}\;,\;\;\;\partial_qV(q)=0\right\}\;.
\end{align*}
Clearly, by continuity of the map $q\mapsto\partial_qV(q)$ on the
shell $\overline{\mathcal{C}}$ (which is a compact set of $\mathbb{R}^d$), we
deduce the compactness of $K_0.$

Since $q\mapsto\frac{\mathrm{Tr}_{-,V}(q)}{1+\mathrm{Tr}_{+,V}(q)}$ is
uniformly continuous on any compact neighborhood of $K_{0}$\,, there
exists $\varepsilon_{1}>0$ such that
\begin{align}
d(q,K_0)\le \epsilon_1\Rightarrow \frac{\mathrm{Tr}_{-,V}(q)}{1+\mathrm{Tr}_{+,V}(q)}\ge \frac{\epsilon_0}{2}\;,\label{a.33.6.}
\end{align}
where $\epsilon_0:=\min\limits_{q\in K_0}\frac{\mathrm{Tr}_{-,V}(q)}{1+\mathrm{Tr}_{+,V}(q)}.$

On the other hand, in vue of the definition of $K_0$ and by continuity of $q\mapsto \partial_qV(q)$ on $\overline{\mathcal{C}},$ there is a constant $\epsilon_2>0$ (that depends on $\epsilon_1$) such that 
\begin{align}
\forall\; q\in \overline{\mathcal{C}}\;,\;\;d(q,K_0)\ge \epsilon_1\Rightarrow |\partial_qV(q)|\ge \epsilon_2\;.\label{a.337}
\end{align}
Now let us introduce
\begin{align*}
\Sigma(\epsilon_1)=\left\{q\in \mathcal{C}\;,\;\;d(q,K_0)\ge\epsilon_1\right\}\;,
\end{align*}
\begin{align*}
I(\epsilon_1)=\left\{k\in \mathbb{Z}\;,\;\;\mathrm{supp}\;\theta_{k,h}\subset\Sigma(\epsilon_1)\right\}\;.
\end{align*}
In order to establish a subelliptic estimate for $K_{j,V},$ we distinguish the two following cases.
\begin{outerdesc}
    \item[Case \textbf{1}] $k\not\in I(\epsilon_1).$ In this case the support of the cutoff function $\theta_{k,h}$ might intercect the set of zeros of the gradient of $V.$
    \item[Case \textbf{2}] $k\in I(\epsilon_1).$ Here the gradient of $V$ does not vanish for all $q$ in the support of $\theta_{k,h}.$
\end{outerdesc}
\smallskip
    
    The idea is to use, in the suitable situation, either  quadratic
    or linear approximating polynomial $\widetilde{V}$ near some $q'_{k,h}\in\mathrm{supp}\;\theta_{k,h}$ to write 
\begin{align*}
\sum\limits_{k\ge-1}\|K_{j,V}w_{k,j}\|^2_{L^2}\ge \frac{1}{2}\sum\limits_{k\ge-1}\|K_{j,\widetilde{V}}w_{k,j}\|^2_{L^2}-\|(K_{j,V}-K_{j,\widetilde{V}})w_{k,j}\|^2_{L^2}\;,
\end{align*}
or equivalently
\begin{align}
\sum\limits_{k\ge-1}\|K_{j,V}w_{k,j}\|^2_{L^2}\ge \frac{1}{2}\sum\limits_{k\ge-1}\|K_{j,\widetilde{V}}w_{k,j}\|^2_{L^2}-\|\frac{1}{\sqrt{h}}(\partial_qV(q)- \partial_q\widetilde{V}(q))\partial_pw_{k,j}\|^2_{L^2}\;.\label{a.33.444}
\end{align}
Then based on the estimates written in \cite{BNV}, which are valid for the operator $K_{\widetilde{V}},$ we deduce a subelliptic estimate for $K_{\widetilde{V}},$ after a careful control of the errors which appear in (\ref{a.32.13}) and (\ref{a.33.444}).

\noindent\textbf{Case 1.} In this situation, we use the quadractic approximation near some element \\$q'_{k,h}\in\mathrm{supp}\;\theta_{k,h}\cap(\mathbb{R}^d\setminus \Sigma(\epsilon_1)),$
\begin{align*}
V^2_{k,h}(q)&=\sum\limits_{|\alpha|\le2}\frac{\partial_q^{\alpha}V(q'_{k,h})}{\alpha!}(q-q'_{k,h})^{\alpha}\;.
\end{align*}
Notice that one has for all $q\in\mathbb{R}^d,$
\begin{align}
 |V(q)- V^2_{k,h}(q)|=\mathcal{O}(|q-q'_{k,h}|^3)\;.
\end{align}
Accordingly, for every $q$ in the support of $w_{k,j},$
 \begin{align}
 |\partial_qV(q)- \partial_qV^2_{k,h}(q)|&=\mathcal{O}(|q-q'_{k,h}|^2)\nonumber\\&=\mathcal{O}(|\ln(h)|^2h^{2\nu})\;.\label{a.33.66}
\end{align}
Combining (\ref{a.33.444}) and (\ref{a.33.66}), there is  a constant $c > 0$ such that 
 \begin{align}
\sum\limits_{k\ge-1}\|K_{j,V}w_{k,j}\|^2_{L^2}&\ge \frac{1}{2}\sum\limits_{k\ge-1}\|K_{j,V^2_{k,h}}w_{k,j}\|^2_{L^2}-c\,\frac{(|\ln(h)|^2h^{2\nu})^2}{h}\|\partial_pw_{k,j}\|^2_{L^2}\nonumber\\&\ge \frac{1}{2}\sum\limits_{k\ge-1}\|K_{j,V^2_{k,h}}w_{k,j}\|^2_{L^2}-c\,\frac{(|\ln(h)|^2h^{2\nu})^2}{h}\|w_{k,j}\|_{L^2}\|K_{j,V^2_{k,h}}w_{k,j}\|_{L^2}\nonumber\\&\ge \frac{3}{16}\sum\limits_{k\ge-1}\|K_{j,V^2_{k,h}}w_{k,j}\|^2_{L^2}-2c\,\frac{(|\ln(h)|^2h^{2\nu})^2}{h}\|w_{k,j}\|^2_{L^2}\;.\label{a.32.77}
\end{align}
Putting (\ref{a.32.13}) and (\ref{a.32.77}) together,
\begin{align}
\|K_{j,V}v_j\|^2\ge \frac{9}{64}\sum\limits_{k\ge-1}\|K_{j,V^2_{k,h}}w_{k,j}\|^2-\frac{3}{2}\,c\,\frac{(|\ln(h)|^2h^{2\nu})^2}{h}\|w_{k,j}\|^2-2|\ln(h)|^{-2}h^{\frac{1}{r-1}-2\nu}\|w_{k,j}\|^2\;.\label{a.3312}
\end{align} 
On the other hand, owning to a change of variables $q"=qh^{\frac{1}{2(r-1)}},$ one can write 
\begin{align}
\|K_{j,V^2_{k,h}}w_{k,j}\|_{L^2}=\|\widetilde{K}_{j,V^2_{k,h}}\widetilde{w}_{k,j}\|_{L^2}\label{a.33.88}\;,
\end{align}
where the operator $\widetilde{K}_{j,V^2_{k,h}}$ reads
\begin{align*}
\widetilde{K}_{j,V^2_{k,h}}&=p\partial_q-h^{-\frac{1}{2}}\partial_qV^2_{k,h}(h^{\frac{1}{2(r-1)}}q)\partial_p+\frac{1}{2}(-\Delta_p+p^2)
\\
&
=
p\partial_q-\underbrace{h^{-\frac{1}{2}+\frac{1}{2(r-1)}}}_{=:H}\partial_qV^2_{k,h}(q)\partial_p+\frac{1}{2}(-\Delta_p+p^2)
\;,
\end{align*}
and 
\begin{align*}
w_{k,j}(q,p)=\frac{1}{h^{\frac{d}{4(r-1)}}}\widetilde{w}(\frac{q}{h^{\frac{1}{2(r-1)}}},p)\;.
\end{align*}
In the rest of the proof we denote \begin{align*}
H=h^{-\frac{1}{2}}h^{\frac{1}{2(r-1)}}\;.
\end{align*}
From now on assume $j\in\mathbb{N}.$ In view of (\ref{a.33.6.}), $\mathrm{Tr}_{-,V^2_{k,h}}=\mathrm{Tr}_{-,V}(q'_{k,h})\not=0.$ Hence by (\ref{1.5mm}), 
\begin{align}
\|\widetilde{K}_{j,V^2_{k,h}}\widetilde{w}_{k,j}\|^2_{L^2}\ge c\,\frac{1+H\mathrm{Tr}_{-,V^2_{k,h}}}{\log(2+H\mathrm{Tr}_{-,V^2_{k,h}})^2}\| \widetilde{w}_{k,j}\|^2_{L^2}\;.
\end{align}
Or samely 
\begin{align}
\|\widetilde{K}_{j,V^2_{k,h}}\widetilde{w}_{k,j}\|^2_{L^2}\ge c\,\frac{1+H\mathrm{Tr}_{-,V}(q'_{k,h})}{\log(2+H\mathrm{Tr}_{-,V}(q'_{k,h}))^2}\| \widetilde{w}_{k,j}\|^2_{L^2}\;.\label{a.3316}
\end{align}
Using once more (\ref{a.33.6.}), 
\begin{align}
\mathrm{Tr}_{-,V}(q'_{k,h})\ge \frac{\epsilon_0}{2}(1+\mathrm{Tr}_{+,V}(q'_{k,h}))\;,
\end{align}
where we remind that $\epsilon_0=\min\limits_{q\in K_0}\frac{\mathrm{Tr}_{-,V}(q)}{1+\mathrm{Tr}_{+,V}(q)}.$ Consequently
\begin{align}
|\mathrm{Hess}\;V(q'_{k,h})|\ge \mathrm{Tr}_{-,V}(q'_{k,h})\ge \frac{\epsilon_0}{2}\;,
\end{align}
and 
\begin{align}
\mathrm{Tr}_{-,V}(q'_{k,h})&\ge \frac{1}{2}\mathrm{Tr}_{-,V}(q'_{k,h})+\frac{\epsilon_0}{4}(1+\mathrm{Tr}_{+,V}(q'_{k,h}))\nonumber\\&\ge \frac{1}{2}\min(1,\frac{\epsilon_0}{2})(\mathrm{Tr}_{-,V}(q'_{k,h})+\mathrm{Tr}_{+,V}(q'_{k,h}))\nonumber\\&\ge \frac{1}{2}\min(1,\frac{\epsilon_0}{2})|\mathrm{Hess}\;V(q'_{k,h})|\;.\label{a.3318}
\end{align} 
Furthermore by continuity of the map $q\mapsto \mathrm{Tr}_{-,V}(q)$ on the compact set $
\overline{\mathcal{C}},$ there exists a constant $\epsilon_3 > 0$ such that 
$\mathrm{Tr}_{-,V}(q)\le \epsilon_3$ for all $q\in\overline{\mathcal{C}}.$ Hence \begin{align}\frac{\epsilon_0}{2}\le \mathrm{Tr}_{-,V}(q'_{k,h})\le \epsilon_3\;.\label{a.3317}
\end{align}
From (\ref{a.3316}), (\ref{a.3318}) and  (\ref{a.3317}),
\begin{align*}
\|\widetilde{K}_{j,V^2_{k,h}}\widetilde{w}_{k,j}\|^2_{L^2}\ge c\,\frac{H}{\log(H)^2}\| \widetilde{w}_{k,j}\|^2_{L^2}\;.
\end{align*}
It follows from the above inequality and (\ref{a.33.88}),
\begin{align}
\|K_{j,V^2_{k,h}}w_{k,j}\|^2_{L^2}\ge c\,\frac{H}{\log(H)^2}\| w_{k,j}\|^2_{L^2}\;.\label{a.33.1333}
\end{align}
Now using the estimate (\ref{a.33.1333}), we should control the errors
coming from the partition of unity and the quadratic
approximation. For this reason, notice that  our choice of
exponent $\nu$ in  \eqref{a.32..9} implies
\begin{align*}
\left\{
\begin{array}{l}
\frac{(|\ln(h)|^2h^{2\nu})^2}{h}\ll \frac{H}{\log(H)^2}\\
 |\ln(h)|^{-2}h^{\frac{1}{r-1}-2\nu}\ll \frac{H}{\log(H)^2}\;.
\end{array}
\right.
\end{align*}
As a result, collectting the estimates (\ref{a.3312}) and (\ref{a.33.1333}), we deduce the existence of a constant $c>0$ such that 
\begin{align}
\|K_{j,V}v_j\|^2_{L^2}\ge c\sum\limits_{k\ge-1}\|K_{j,V^2_{k,h}}w_{k,j}\|^2_{L^2}\;.\label{a.33...15}
\end{align}
Via (\ref{eq44}), there is a constant $c>0$ so that
\begin{align}
\|\widetilde{K}  _{j,V^2_{k,h}}\widetilde{w}_{k,j}\|^2+(1+10c)H|\mathrm{Hess}\;V(q'_{k,h})|\|\widetilde{w}_{k,j}\|^2\ge c\Big(\|O_p\widetilde{w}_{k,j}\|^2&+\|\langle D_q \rangle^{\frac{2}{3}} \widetilde{w}_{k,j}\|^2\nonumber\\&+H|\mathrm{Hess}\;V(q'_{k,h})|\|\widetilde{w}_{k,j}\|^2\Big)\;.
\end{align}
Hence using the reverse change of variables $q"=\frac{q}{h^{\frac{1}{2(r-1)}}}\;,$ we obtain in view of the above estimate and (\ref{a.33.88}),
\begin{align}
\|K_{j,V^2_{k,h}}w_{k,j}\|^2+(1+10c)H|\mathrm{Hess}\;V(q'_{k,h})|\|w_{k,j}\|^2\ge c\Big(\|O_pw_{k,j}\|^2&+\|\langle h^{\frac{1}{2(r-1)}} D_q \rangle^{\frac{2}{3}} w_{k,j}\|^2\nonumber\\&+H|\mathrm{Hess}\;V(q'_{k,h})|\|w_{k,j}\|^2\Big)\;.\label{a.33.16}
\end{align}
Or by (\ref{a.3318}) and  (\ref{a.3317}),
\begin{align}
\frac{\epsilon_0}{2}\le|\mathrm{Hess}\;V(q'_{k,h})|\le \frac{2\epsilon_3}{\min(1,\frac{\epsilon_0}{2})}\;,\label{a.33.19}\end{align}
Putting (\ref{a.33.16}) and (\ref{a.33.19}) together, there is a constant $c>0$ so that
\begin{align}
\|K_{j,V^2_{k,h}}w_{k,j}\|^2+H\|w_{k,j}\|^2\ge c\Big(\|O_pw_{k,j}\|^2&+\|\langle h^{\frac{1}{2(r-1)}} D_q \rangle^{\frac{2}{3}}w_{k,j}\|^2\nonumber\\&+H\|w_{k,j}\|^2+\|\langle H|\mathrm{Hess}\;V(q'_{k,h})|\rangle^{\frac{1}{2}}w_{k,j}\|^2\Big)\;.\label{a
33.1888}
\end{align}
On the other hand, for all $q\in\mathrm{supp}\;w_{k,j},$ \begin{align}|\mathrm{Hess}\;V(q)-\mathrm{Hess}\;V(q'_{k,h})|=\mathcal{O}(|q-q'_{k,h}|)=\mathcal{O}(|\ln(h)|h^{\nu})\label{a.33266}\;\end{align}
Therefore by (\ref{a.33.19}) and (\ref{a.33266}), we obtain for every $q\in\mathrm{supp}\;w_{k,j}$ and all $j$ sufficiently large.
\begin{align}
 \frac{1}{2}|\mathrm{Hess}\;V(q'_{k,h})|\le|\mathrm{Hess}\;V(q)|\le \frac{3}{2}|\mathrm{Hess}\;V(q'_{k,h})|\;.\label{a.3326}
\end{align}
From (\ref{a
33.1888}) and (\ref{a.3326}), there exists a constant $c>0$ so that 
\begin{align}
\|K_{j,V^2_{k,h}}w_{k,j}\|^2+H\|w_{k,j}\|^2\ge c\Big(\|O_pw_{k,j}\|^2&+\|\langle h^{\frac{1}{2(r-1)}} D_q \rangle^{\frac{2}{3}} w_{k,j}\|^2\nonumber\\&+H\|w_{k,j}\|^2+\|\langle H|\mathrm{Hess}\;V(q)|\rangle^{\frac{1}{2}}w_{k,j}\|^2\Big)\;,\label{a
3328}
\end{align}
is valid for all $j$ large enough.

Furthermore, by continuity of the map $q\mapsto|\partial_qV(q)|^{\frac{4}{3}}$ on the fixed shell $\overline{\mathcal{C}}$, for all $q\in\mathrm{supp}\; w_{k,j}$
\begin{align}
\frac{1}{4}H\ge c\,|h^{-\frac{1}{2}}\partial_qV(q)|^{\frac{4}{3}}\;,\label{a
33.199}
\end{align}
holds for all $j$ sufficiently large.

In such a way, considering (\ref{a
3328}) and (\ref{a
33.199})
\begin{align}
\|K_{j,V^2_{k,h}}w_{k,j}\|^2+(2+H)\|w_{k,j}\|^2\ge c\Big(\|&O_pw_{k,j}\|^2+\|\langle h^{\frac{1}{2(r-1)}} D_q \rangle^{\frac{2}{3}} w_{k,j}\|^2+(2+H)\|w_{k,j}\|^2\nonumber\\&+\|(H|\mathrm{Hess}\;V(q)|)^{\frac{1}{2}}w_{k,j}\|^2+\|\langle h^{-\frac{1}{2}}|\partial_qV(q)|\rangle^{\frac{2}{3}}w_{k,j}\|^2\Big)\;.\label{a.33.20000}
\end{align}
Putting (\ref{a.33.1333}) and (\ref{a.33.20000}) together,
\begin{align}
\|K_{j,V^2_{k,h}}w_{k,j}\|^2\ge c\Big(\|\frac{O_p}{\log(2+H)}&w_{k,j}\|^2+\|\frac{\langle h^{\frac{1}{2(r-1)}} D_q \rangle^{\frac{2}{3}}}{\log(2+H)}w_{k,j}\|^2+\|\frac{(2+H)^{\frac{1}{2}}}{\log(2+H)}w_{k,j}\|^2\nonumber\\&+\|\frac{\langle H|\mathrm{Hess}\;V(q)|\rangle^{\frac{1}{2}}}{\log(2+H)}w_{k,j}\|^2+\|\frac{\langle h^{-\frac{1}{2}}|\partial_qV(q)|\rangle^{\frac{2}{3}}}{\log(2+H)}w_{k,j}\|^2\Big)\;,\label{a.33.201}
\end{align}
holds for all $j\ge j_0,$ for some $j_0\ge 1$ large enough.

Now let us collect the finite remaining terms for $-1\le j\le j_0.$
After recalling $h=2^{-j}$ and $H=h^{-\frac{1}{2}+\frac{1}{2(r-1)}}$
we define
\begin{multline*}
   c_V^{(1)}=\max_{-1\le j\le j_0}
\left[A_{V_{k,h}^{2}}+\sup_{q\in \mathrm{supp}\,(\chi_{j}\theta_{k,h})}\left(\langle
H|\mathrm{Hess}~V(q)|\rangle+\langle
h^{-\frac{1}{2}}\left|\partial_{q}V(q)\right|\rangle^{4/3}\right)
\right.
\\
\left.
+\frac{(2+H)}{\log(2+H)^2}+\frac{3}{2}\,c\,\frac{(|\ln(h)|^2h^{2\nu})^2}{h}+2|\ln(h)|^{-2}h^{\frac{1}{r-1}-2\nu}\right]\,.
\end{multline*}
From the lower bound \eqref{eq44}, we deduce the existence of a constant $c>0$ so that 
\begin{align}
\frac{9}{64}\|K_{V_{k,h}^{2}}w_{k,j}\|&+(c_V^{(1)}-\frac{3}{2}\,c\,\frac{(|\ln(h)|^2h^{2\nu})^2}{h}-2|\ln(h)|^{-2}h^{\frac{1}{r-1}-2\nu})\|w_{k,j}\|^2\nonumber\\&\geq c\Big(\|O_pw_{k,j}\|^{2}+\|\langle
 h^{\frac{1}{2(r-1)}} D_{q}\rangle^{2/3}w_{k,j}\|^{2} 
+\|\langle h^{-\frac{1}{2}}|\partial_{q}V(q)|\rangle^{2/3}w_{k,j}\|^{2}
\nonumber\\&\hspace{4cm}+\|\langle 
H|\mathrm{Hess}~V(q)|\rangle^{1/2}w_{k,j}\|^{2}
+\|\frac{(2+H)^{\frac{1}{2}}}{\log(2+H)}w_{k,j}\|^{2}\Big)\;,\label{a.33334}
\end{align}
holds for all $-1\le j\le j_0.$

Finally, collecting (\ref{a.33...15}), (\ref{a.33.201}) and (\ref{a.33334}),
\begin{align}
\|K_{j,V}v_j\|^2+c_V^{(2)}\|v_j\|^2\ge c\sum\limits_{k\not\in I(\epsilon_1)}\Big(\|&\frac{O_p}{\log(2+H)}w_{k,j}\|^2+\|\frac{\langle h^{\frac{1}{2(r-1)}} D_q \rangle^{\frac{2}{3}}}{\log(2+H)}w_{k,j}\|^2+\|\frac{(2+H)^{\frac{1}{2}}}{\log(2+H)}w_{k,j}\|^2\nonumber\\&+\|\frac{\langle H|\mathrm{Hess}\;V(q)|\rangle^{\frac{1}{2}}}{\log(2+H)}w_{k,j}\|^2+\|\frac{\langle h^{-\frac{1}{2}}|\partial_qV(q)|\rangle^{\frac{2}{3}}}{\log(2+H)}w_{k,j}\|^2\Big)\;,\label{a.33.2000}
\end{align}
is valid for every $j\ge -1.$ 
\smallskip

\noindent\textbf{Case 2.} We consider in this case the linear approximating polynomial
\begin{align*}
V^1_{k,h}(q)&=\sum\limits_{|\alpha|=1}\frac{\partial_q^{\alpha}V(q_{k,h})}{\alpha!}(q-q_{k,h})^{\alpha}\;.
\end{align*}
Note that for any $q\in\mathbb{R}^d,$
\begin{align}
 |V(q)- V^1_{k,h}(q)|=\mathcal{O}(|q-q_{k,h}|^2)\;,
\end{align}
and for every $q\in\mathrm{supp}\;w_{k,j},$
 \begin{align}
 |\partial_qV(q)- \partial_qV^1_{k,h}(q)|&=\mathcal{O}(|q-q_{k,h}|)\nonumber\\&=\mathcal{O}(|\ln(h)|h^{\nu})\;.\label{a.32.24}
\end{align}
Due to (\ref{a.33.444}) and (\ref{a.32.24}),
 \begin{align}
\sum\limits_{k\ge-1}\|K_{j,V}w_{k,j}\|^2&
\ge \frac{1}{2}\sum\limits_{k\ge-1}\|K_{j,V^1_{k,h}}w_{k,j}\|^2-c\,\frac{(|\ln(h)|h^{\nu})^2}{h}\|\partial_pw_{k,j}\|^2\nonumber\\&\ge \frac{1}{2}\sum\limits_{k\ge-1}\|K_{j,V^1_{k,h}}w_{k,j}\|^2-c\,\frac{(|\ln(h)|h^{\nu})^2}{h}\|w_{k,j}\|\|K_{j,V^1_{k,h}}w_{k,j}\|\nonumber\\&\ge \frac{3}{16}\sum\limits_{k\ge-1}\|K_{j,V^1_{k,h}}w_{k,j}\|^2-2c\,\frac{(|\ln(h)|h^{\nu})^2}{h}\|w_{k,j}\|^2\;.\label{a.32.255}
\end{align}
Assembling (\ref{a.32.13}) and (\ref{a.32.255}),
\begin{align}
\|K_{j,V}v_j\|^2\ge \frac{9}{64}\sum\limits_{k\ge-1}\|K_{j,V^1_{k,h}}w_{k,j}\|^2-\frac{3}{2}\,c\,\frac{(|\ln(h)|^2h^{2\nu})^2}{h}\|w_{k,j}\|^2-2|\ln(h)|^{-2}h^{\frac{1}{r-1}-2\nu}\|w_{k,j}\|^2\;.\label{a.3336}
\end{align} 
Additionally, one has
\begin{align}
\|K_{j,V^1_{k,h}}w_{k,j}\|_{L^2}=\|\widetilde{K}_{j,V^1_{k,,h}}\widetilde{w}_{k,j}\|_{L^2}\;,\label{a.32.666}
\end{align}
where the operator $\widetilde{K}_{j,V^1_{k,,h}}$ is given by
\begin{align}
\widetilde{K}_{j,V^1_{k,h}}&=p\partial_q-h^{-\frac{1}{2}}\partial_qV^1_{k,h}(h^{\frac{1}{2(r-1)}}q)\partial_p+\frac{1}{2}(-\Delta_p+p^2)\nonumber\\&=p\partial_q-h^{-\frac{1}{2}}\partial_qV(q_{k,h})\partial_p+\frac{1}{2}(-\Delta_p+p^2)\;,
\end{align}
and 
\begin{align}
w_{k,j}(q,p)=\frac{1}{h^{\frac{d}{4(r-1)}}}\widetilde{w}(\frac{q}{h^{\frac{1}{2(r-1)}}},p)\;.
\end{align}
Now, in order to absorb the errors in (\ref{a.3336}) we need the following estimates showed in \cite{BNV} (see (\ref{1.5mm})), 
\begin{align}
\|\widetilde{K}  _{j,V^1_{k,h}}\widetilde{w}_{k,j}\|^2_{L^2}\ge c\| (h^{-\frac{1}{2}}|\partial_qV(q_{k,h})|)^{\frac{2}{3}}\widetilde{w}_{k,j}\|^2_{L^2}\;.\label{a.32.30}
\end{align}
From now on assume $j\in\mathbb{N}.$ Taking into account (\ref{a.337}) and (\ref{a.32.30}),
\begin{align}
\|K_{j,V^1_{k,h}}\widetilde{w}_{k,j}\|^2_{L^2}\ge c\| (h^{-\frac{1}{2}})^{\frac{2}{3}}\widetilde{w}_{k,j}\|^2_{L^2}\;.\label{a.3341}
\end{align}
Owing to (\ref{a.32.666}) and (\ref{a.32.30}),
\begin{align}
\|K_{j,V^1_{k,h}}w_{k,j}\|^2\ge c\| (h^{-\frac{1}{2}})^{\frac{2}{3}}w_{k,j}\|^2\;.\label{a.33.311}
\end{align}
Note that one has
Therefore, combining (\ref{a.3336}) and (\ref{a.33.311}), there is a constant $c>0$ so that 
\begin{align}
\|K_{j,V}v_j\|^2\ge c\sum\limits_{k\ge-1}\|K_{j,V^1_{k,h}}w_{k,j}\|^2\;.\label{a.33.3222}
\end{align}
Using once more \cite{BNV} (see (\ref{eq44})), there is a constant $c>0$ such that
\begin{align}
\|\widetilde{K}_{j,V^1_{k,h}}\widetilde{w}_{k,j}\|^2\ge c\Big(\|O_p\widetilde{w}_{k,j}\|^2+\|\langle D_q \rangle^{\frac{2}{3}} \widetilde{w}_{k,j}\|^2+\|\langle h^{-\frac{1}{2}}|\partial_qV(q_{k,h})|\rangle ^{\frac{2}{3}}\widetilde{w}_{k,j}\|^2\Big)\;.\label{a.33.344}
\end{align}
As a consequence of (\ref{a.32.666}) and (\ref{a.33.344}),
\begin{align}
\|K_{j,V^1_{k,h}}w_{k,j}\|^2\ge c\Big(\|O_pw_{k,j}\|^2+\|\langle h^{\frac{1}{2(r-1)}} D_q \rangle^{\frac{2}{3}} w_{k,j}\|^2+\|\langle h^{-\frac{1}{2}}|\partial_qV(q_{k,h})|\rangle ^{\frac{2}{3}}w_{k,j}\|^2\Big)\;.\label{a.3346}
\end{align}
By (\ref{a.337}) and (\ref{a.32.24}),
\begin{align}
 \frac{1}{2}|\partial_qV(q)|\le|\partial_qV(q_{k,h})|\le \frac{3}{2}|\partial_qV(q)|\;,\label{a.3326}
\end{align}
holds for all $q\in\mathrm{supp}\;w_{k,j}$ and any $j$ large.
Then, it follows from (\ref{a.3326}) and (\ref{a.3346}),
\begin{align}
\|K_{j,V^1_{k,h}}w_{k,j}\|^2\ge c\Big(\|O_pw_{k,j}\|^2+\|\langle h^{\frac{1}{2(r-1)}} D_q \rangle^{\frac{2}{3}} w_{k,j}\|^2+\|\langle h^{-\frac{1}{2}}|\partial_qV(q)|\rangle ^{\frac{2}{3}}w_{k,j}\|^2\Big)\;.
\end{align}
Or in this case,  in vue of the (\ref{a.337}), one has $|\partial_qV(q)|\ge \epsilon_2$ for all $q\in \mathrm{supp}\; w_{k,j}.$ Hence it results from the above inequality
\begin{align}
\|K_{j,V^1_{k,h}}w_{k,j}\|^2\ge c\Big(\|O_pw_{k,j}\|^2&+\|\langle h^{\frac{1}{2(r-1)}} D_q \rangle^{\frac{2}{3}} w_{k,j}\|^2+\|(h^{-\frac{1}{2}})^{\frac{2}{3}}w_{k,j}\|^2+\|\langle h^{-\frac{1}{2}}|\partial_qV(q)|\rangle ^{\frac{2}{3}}w_{k,j}\|^2\Big)\;.\label{a.3335}
\end{align}
Furthermore, by continuity of $q\mapsto|\mathrm{Hess}\;V(q)|$ on the compact set $\overline{\mathcal{C}},$ one has for all \\$q\in\mathrm{supp}\; w_{k,j}$ and any $j$ large
\begin{align}
\frac{1}{4}(h^{-\frac{1}{2}})^{\frac{4}{3 }}\ge c\, H|\mathrm{Hess}\;V(q)|\;.
\end{align}
Then by the above inequality and (\ref{a.3335}), we get
\begin{align}
\|K_{j,V^1_{k,h}}w_{k,j}\|^2\ge c\Big(\|O_pw_{k,j}\|^2&+\|\langle h^{\frac{1}{2(r-1)}} D_q \rangle^{\frac{2}{3}} w_{k,j}\|^2+\|(2+H)^{\frac{1}{2}}w_{k,j}\|^2\nonumber\\&+\|\langle H|\mathrm{Hess}\;V(q)|\rangle^{\frac{1}{2}}w_{k,j}\|^2+\|\langle h^{-\frac{1}{2}}|\partial_qV(q)|\rangle^{\frac{2}{3}}w_{k,j}\|^2\Big)\;,\label{a.33.7.7}
\end{align}
for every $j\ge j_1$ for some $j_1\ge 1$ large. Now set
\begin{multline*}
   c_V^{(3)}=\max_{-1\le j\le j_1}
\left[\sup_{q\in \mathrm{supp}\,(\chi_{j}\theta_{k,h})}\left(\langle
H|\mathrm{Hess}~V(q)|\rangle+\langle
h^{-\frac{1}{2}}\left|\partial_{q}V(q)\right|\rangle^{4/3}\right)
\right.
\\
\left.
+\frac{(2+H)}{\log(2+H)^2}+\frac{3}{2}\,c\,\frac{(|\ln(h)|^2h^{2\nu})^2}{h}+2|\ln(h)|^{-2}h^{\frac{1}{r-1}-2\nu}\right]\,.
\end{multline*}
Seeing \eqref{eq44}, we deduce the existence of a constant $c>0$ so that 
\begin{align}
\frac{9}{64}\|K_{V_{k,h}^{1}}w_{k,j}\|&+(c_V^{(3)}-\frac{3}{2}\,c\,\frac{(|\ln(h)|^2h^{2\nu})^2}{h}-2|\ln(h)|^{-2}h^{\frac{1}{r-1}-2\nu})\|w_{k,j}\|^2\nonumber\\&\geq c(\|O_pw_{k,j}\|^{2}+\|\langle
 h^{\frac{1}{2(r-1)}} D_{q}\rangle^{2/3}w_{k,j}\|^{2} 
+\|\langle h^{-\frac{1}{2}}|\partial_{q}V(q)|\rangle^{2/3}w_{k,j}\|^{2}
\nonumber\\&\hspace{4cm}+\|\langle 
H|\mathrm{Hess}~V(q)|\rangle^{1/2}w_{k,j}\|^{2}
+\|\frac{(2+H)^{\frac{1}{2}}}{\log(2+H)}w_{k,j}\|^{2})\;,\label{a.333334}
\end{align}
holds for all $-1\le j\le j_1.$

Thus, combining the estimates (\ref{a.33.3222}), (\ref{a.33.7.7}) and (\ref{a.333334})
\begin{align}
\|K_{j,V}v_j\|^2+c_V^{(4)}\|v_j\|^2\ge c\sum\limits_{k\in I(\epsilon_1)}\Big(\|&O_pw_{k,j}\|^2+\|\langle h^{\frac{1}{2(r-1)}} D_q \rangle^{\frac{2}{3}}w_{k,j}\|^2+\|\frac{(2+H)^{\frac{1}{2}}}{\log(2+H)}w_{k,j}\|^{2}\nonumber\\&+\|\langle H|\mathrm{Hess}\;V(q)|\rangle ^{\frac{1}{2}}w_{k,j}\|^2+\|\langle h^{-\frac{1}{2}}|\partial_qV(q)|\rangle ^{\frac{2}{3}}w_{k,j}\|^2\Big)\;,\label{a.33.200}
\end{align}
holds for all $j\ge-1.$

In conclusion, in view of (\ref{a.33.2000}) and (\ref{a.33.200}), there is a constant $c>0$ such that
\begin{align}
\|K_{j,V}v_j\|^2+c_V^{(5)}\|v_j\|^2\ge c\sum\limits_{k\ge-1}\Big(\|&\frac{O_p}{\log(2+H)}w_{k,j}\|^2+\|\frac{\langle h^{\frac{1}{2(r-1)}} D_q \rangle^{\frac{2}{3}}}{\log(2+H)}w_{k,j}\|^2+\|\frac{(2+H)^{\frac{1}{2}}}{\log(2+H)}w_{k,j}\|^2\nonumber\\&+\|\frac{\langle H|\mathrm{Hess}\;V(q)|\rangle ^{\frac{1}{2}}}{\log(2+H)}w_{k,j}\|^2+\|\frac{\langle h^{-\frac{1}{2}}|\partial_qV(q)|\rangle ^{\frac{2}{3}}}{\log(2+H)}w_{k,j}\|^2\Big)\;,\label{a.3333555}
\end{align}
holds for all $j\ge -1.$ 

Finally setting $L(s) =\frac{ s+1}{\log(s+1)}$ for all $s \ge 1,$ notice that there is a constant $c > 0$ such that for all $x \ge 1,$
\begin{align}
\inf_{t\ge2}\frac{x}{\log(t)}+ t \ge \frac{ 1}{c}L(x)\;.\label{a.33555}
\end{align}
After setting the quantities 
\begin{align*}
\Lambda_{1,j}=\frac{O_p}{\log(2+H)}~,&\quad\Lambda_{2,j}=\frac{\langle H|\mathrm{Hess}\;V(q)|\rangle^{1/2}}{\log(2+H)}~,\quad\Lambda_{3,j}=\frac{\langle h^{-\frac{1}{2}}|\partial_q V(q)|\rangle^{\frac23}}{\log(2+H)}~,\\&\quad\Lambda_{4,j}=\frac{2+H}{\log(2+H)^2}\;,\quad\Lambda_{5,j}=\frac{\langle h^{\frac{1}{2(r-1)}}D_q \rangle)^{\frac{2}{3}}}{\log(2+H)}~,
\end{align*}
we get through the estimate (\ref{a.33555}), for every $j,k\ge -1$ 
\begin{align*}
\|\Lambda_{1,j}w_{k,j}\|^2_{L^2}+\frac{1}{4}\|\Lambda_{4,j}w_{k,j}\|^2_{L^2}\ge c_1\|L(O_p)w_{k,j}\|^2_{L^2}\;,
\end{align*}
\begin{align*}
\|\Lambda_{5,j}w_{k,j}\|^2_{L^2}+\frac{1}{4}\|\Lambda_{4,j}w_{k,j}\|^2_{L^2}\ge c_2\|L(\langle h^{\frac{1}{2(r-1)}} D_q \rangle^{\frac{2}{3}})w_{k,j}\|^2_{L^2}\;,
\end{align*}
\begin{align*}
\|\Lambda_{2,j}w_{k,j}\|^2_{L^2}+\frac{1}{4}\|\Lambda_{4,j}w_{k,j}\|^2_{L^2}\ge c_3\|L(\langle H|\mathrm{Hess}\;V(q)|\rangle^{\frac{1}{2}})w_{k,j}\|^2_{L^2}\;,
\end{align*}
\begin{align*}
\|\Lambda_{3,j}w_{k,j}\|^2+\frac{1}{4}\|\Lambda_{4,j}w_{k,j}\|^2_{L^2}\ge c_4\|L(\langle h^{-\frac{1}{2}}|\partial_qV(q)|\rangle^{\frac{2}{3}})w_{k,j}\|^2_{L^2}\;.
\end{align*}
From the above estimates and (\ref{a.3333555}),
\begin{align}
\|K_{j,V}v_j\|^2+c_V^{(6)}\|v_j\|^2\ge\; c\sum\limits_{k\ge-1}&\Big(\|L(O_p)w_{k,j}\|^2+\|L(\langle h^{\frac{1}{2(r-1)}} D_q \rangle^{\frac{2}{3}})w_{k,j}\|^2\nonumber\\&+\|L(\langle H|\mathrm{Hess}\;V(q)|\rangle^{\frac{1}{2}})w_{k,j}\|^2+\|L(\langle h^{-\frac{1}{2}}|\partial_qV(q)|\rangle^{\frac{2}{3}})w_{k,j}\|^2\Big)\;.
\end{align}
Therefore in view of Lemma 2.5 in \cite{Ben} conjugated by the unitary transformation of the change of scale,
\begin{align}
\|K_{j,V}v_j\|^2+c_V^{(7)}\|v_j\|^2\ge\; c\Big(\|L(O_p&)v_j\|^2+\|L(\langle h^{\frac{1}{2(r-1)}} D_q \rangle^{\frac{2}{3}})v_j\|^2\nonumber\\&+\|L(\langle H|\mathrm{Hess}\;V(q)|\rangle^{\frac{1}{2}} )v_j\|^2+\|L(\langle h^{-\frac{1}{2}}|\partial_qV(q)|\rangle^{\frac{2}{3}} )v_j\|^2\Big)\;,
\end{align}
or equivalently
\begin{align}
\|K_{V}u_j\|^2+c_V^{(7)}\|u_j\|^2\ge c\Big(\|L(O_p)&u_j\|^2+\|L(\langle D_q \rangle^{\frac{2}{3}})u_j\|^2\nonumber\\&+\|L(\langle \mathrm{Hess}\;V(q)\rangle^{\frac{1}{2}})u_j\|^2+\|L(\langle \partial_qV(q)\rangle^{\frac{2}{3}})u_j\|^2\Big)\;,
\end{align}
for every $j\ge -1.$

Therefore, combining the last estimate and (\ref{a.33111}), there is a constant $C_V>1$ so that
\begin{align}
\|K_{V}u\|^2_{L^2(\mathbb{R}^{2d})}+C_V\|u\|^2_{L^2(\mathbb{R}^{2d})}\ge \frac{1}{C_V}\Big(\|&L(O_p)u\|^2+\|L(\langle D_q \rangle^{\frac{2}{3}})u\|^2\nonumber\\&+\|L(\langle\mathrm{Hess}\;V(q)\rangle^{\frac{1}{2}})u\|^2+\|L(\langle \partial_qV(q)\rangle^{\frac{2}{3}})u\|^2\Big)
\end{align}
holds for all $u\in\mathcal{C}_0^{\infty}(\mathbb{R}^{2d}).$
\end{proof}
\textbf{Acknowledgement} I would like to thank my supervisor Francis Nier for his support and guidance throughout this
work.

\end{document}